\newtheorem{theorem}{Theorem}[section]
\newtheorem{corollary}[theorem]{Corollary}
\newtheorem{lemma}[theorem]{Lemma}
\theoremstyle{definition}
\newtheorem{example}[theorem]{Example}
\theoremstyle{remark}
\newtheorem{remark}[theorem]{Remark}
\newcommand\mL{L\kern-0.08cm\char39}
\begin{document}

\begin{large}

\title[Specification property for step skew products]{Specification property for step skew products}

\author[{\v L}. Snoha]{{\v L}ubom\'\i r Snoha}
\address{Department of Mathematics, Faculty of Natural Sciences, Matej
	Bel University, Tajovsk\'eho 40, 974 01 Bansk\'a Bystrica, Slovakia}
\email{lubomir.snoha@umb.sk}

\thanks{The author was supported by the Slovak Research and Development	Agency 
under the contract No.~APVV-15-0439 and by VEGA grant 1/0158/20.}

\subjclass[2010]{Primary 37B05}

\keywords{Specification property, step skew product, subshift, interval map, nonautonomous dy\-na\-mi\-cal system}

\begin{abstract} Step skew products with interval fibres and a subshift as a base are considered. 
It is proved that if the fibre maps are continuous, piecewise monotone, expanding and surjective
and the subshift has the specification property and a periodic orbit such that the composition 
of the fibre maps along this orbit is mixing, then the corresponding step skew product has the 
specification property.
\end{abstract}

\maketitle

\section{Introduction}
An interesting topic combining nonautonomous dynamical systems, skew products and random dynamical systems is the following. Let $T_1$ and $T_2$ be two continuous selfmaps of the interval $I=[0,1]$. If $x_0\in I$, we decide, by tossing a coin, whether we take $x_1= T_1(x_0)$ or $x_1= T_2(x_0)$. After $(n-1)$ steps arriving at $x_{n-1}$ we again decide, tossing a coin, whether we take $x_n= T_1(x_{n-1})$ or $x_n= T_2(x_{n-1})$. We thus obtain a sequence $(x_n)_{n=0}^\infty$. It can be viewed as the trajectory of the point $x_0$ in the nonautonomous system given by the sequence of maps determined by the coin tossing, each of the maps being either $T_1$ or $T_2$. Since any choice of $\omega = \omega_0\omega_1\omega_2\dots \in \Sigma_2^+ = \{1,2\}^{\mathbb Z_{+}}$, where $\mathbb Z_{+}=\{0,1,2,\dots\}$,
yields a nonautonomous system given by the sequence of maps $T_{\omega_0},T_{\omega_1},T_{\omega_2},\dots$, all such nonautonomous systems are in a sense present in the skew product $(\omega,x) \mapsto (S(\omega), T_{\omega_0}(x))$ where $S$ is the shift transformation $\Sigma^{+}_{2} \to \Sigma^{+}_{2}$, $(S\omega)_n = \omega_{n+1}$.

As a straightforward generalization, one can consider $\Sigma_n^+ = \{1,2, \dots, n\}^{\mathbb Z_{+}}$ and $n$ continuous maps $T_1, T_2, \dots, T_{n}$. Also, instead of the full shift $\Sigma^{+}_{n}$ one can consider a subshift $B\subseteq \Sigma^{+}_{n}$. The present paper deals with the \emph{step skew product} $F\colon B\times I \to B\times I$ defined by 
\begin{equation}\label{Eq:F}
F(\omega,x) = (S(\omega), T_{\omega}(x))
\end{equation}
where $S$ is the shift transformation $\Sigma^{+}_{n} \to \Sigma^{+}_{n}$ and the continuous fibre map $T_{\omega}$ depends only on the first (i.e. beginning) coordinate $\omega_0$ of $\omega$. Clearly, $F$ is continuous.

The dynamics of step skew products with interval fibres has been studied by many authors, usually under additional assumptions on fibre maps, see e.g. the recent papers~\cite{GH2, GO} and references therein.

In the present paper we study the specification property of step skew products \eqref{Eq:F}. The specification property was introduced by Bowen~\cite{Bow}, a chapter on it can be found in~\cite{DGS}. It is a very strong property; systems with this property have a dense subset of periodic points and are topologically mixing. For continuous maps on the interval, specification is equivalent with topological mixing~\cite{bl, bu}. The specification property for step skew products with circle rotations in the fibres was studied in~\cite{MO}.

Our main result is Theorem~\ref{T:main} and Corollary~\ref{C:cor}. However, also  Theorem~\ref{T:non-shrink} used in the proof of Theorem~\ref{T:main} and Example~\ref{Ex:nonaut} are of some interest.

\section{Preliminaries}\label{S:def prelim}

Let $\mathbb N$ be the set of positive integers. Let $(Y, R)$ be a topological dynamical system, which means that $R$ is a~continuous transformation on the compact metric space $Y$ with metric $d$. The dynamical system $(Y, R)$ has the \emph{specification property}, if for every $\varepsilon > 0$ there is an integer $M = M(\varepsilon)$ with the following property: For any integer $k\geq2$, and for any $(y_1, n_1), (y_2, n_2), \ldots, (y_k, n_k) \in Y \times \mathbb{N}$ there exists a point $u \in Y$ with $R^{r_k} (u) = u$ such that 
\[
d(R^i(y_j), R^{r_{j-1}+i}(u))\leq \varepsilon \hskip0.2cm \textrm{for}\hskip0.2cm 0\leq i<n_j \hskip0.2cm\textrm{and}\hskip0.2cm 1\leq j\leq k
\]
where $r_0 = 0$ and $r_j = n_1+n_2+\ldots +n_j+jM$ for $1\leq j\leq k$. We call $M$ the \emph{gap length} for the given~$\varepsilon$.\footnote{Thus, $M=M(\varepsilon)$ is such that for every finite family of orbit segments, if all the gap lengths are \emph{prescribed to be equal to $M$}, an $\varepsilon$-tracing periodic point $u$ does exist. This is equivalent with the definition, also often used, in which  $M=M(\varepsilon)$ is such that for every finite family of orbit segments, if all the gap lengths are \emph{prescribed and greater than or equal to $M$}, an $\varepsilon$-tracing periodic point $u$ still does exist. One implication is trivial. To prove the other one, let a system have the specification property according to the former definition, with $M=M(\varepsilon)$. To show that it has the specification property according to the latter definition, let $\varepsilon>0$ be given. We claim that the same $M=M(\varepsilon)$ works. Indeed, let us have a finite family of orbit segments with lengths $n_i$ and consider any prescribed gap lengths $M+L_i$ with $L_i\geq 0$. We replace the orbit segments of points $y_i$ with lengths $n_i$ by the orbit segments of the same points $y_i$ with lengths $n_i+L_i$. Then, since $M$ was taken from the former definition, an $\varepsilon$-tracing periodic point $u$ exists for this new system of (longer) orbit segments and with all gap lengths equal to $M$. This point $\varepsilon$-traces also the original (shorter) orbit segments, with the gap lengths $M+L_i$, as required.} We also say that the map $R$ itself has the specification property.

Recall that a dynamical system $(X,T)$ or the map $T$ itself is called \emph{(topologically) mixing} if for every pair of nonempty open sets $U$ and $V$ there is a positive integer $N$ such that $T^n(U) \cap V \neq \emptyset$ for all $n \geq N$. It is called \emph{locally eventually onto} (or \emph{topologically exact}) if for every nonempty open set $U$ there is a positive integer $n$ such that $T^n(U) = X$.

By an interval we mean a nondegenerate interval. The length of an interval $J$ is denoted by $|J|$. A continuous map $T\colon [0, 1]\to [0, 1]$ is called \emph{piecewise monotone}, if there is a finite partition $\mathcal{P}$ of $[0, 1]$ into intervals, such that $T{\mid}_P$ is monotone for all $P\in\mathcal{P}$. The endpoints of the intervals in $\mathcal{P}$ are called \emph{critical points}. We say that $T$ is \emph{expanding}, if there is $\alpha > 1$ such that $|T(x)-T(y)| \geq \alpha|x-y|$ holds for all $x, y$ which are in the same element of $\mathcal{P}$. In such a case we call $\alpha$ the \emph{expansion rate}. Recall also that a continuous piecewise monotone map $T\colon [0, 1]\to [0, 1]$ is mixing if and only if it is locally eventually onto, see e.g.~\cite[p. 158]{BC}.

\begin{lemma}\label{L:mix-exp}
	Let $T$ be a mixing, piecewise monotone, continuous transformation on $[0, 1]$. For every $\gamma > 0$ there is an integer $m$ such that $T^m(U)=[0, 1]$ holds for all intervals~$U$ with $|U|\geq \gamma$.
\end{lemma}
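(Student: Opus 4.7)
The plan is to combine the equivalence (mixing $\iff$ locally eventually onto for continuous piecewise monotone interval maps, cited from \cite{BC}) with a simple finite covering argument based on a $\gamma/2$-mesh on $[0,1]$.

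First I would discretize the family of candidate intervals. Let $N = \lceil 2/\gamma\rceil$ and
\[
J_i = \bigl[(i-1)\gamma/2,\ i\gamma/2\bigr]\cap[0,1], \qquad i=1,\dots,N.
\]
The key geometric observation is that any interval $U\subseteq[0,1]$ with $|U|\geq\gamma$ must contain at least two consecutive grid points of the mesh $\{k\gamma/2:k\in\mathbb Z\}$, and therefore contains at least one of the $J_i$. Since there are only finitely many $J_i$ and each has nonempty interior, the locally eventually onto property yields, for each $i$, an integer $n_i$ with $T^{n_i}(J_i)=[0,1]$.

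Second, I would note that $T$ is surjective: since $[0,1]$ is compact and connected, $T([0,1])$ is a compact subinterval, and if it were proper, its complement would be a nonempty open set meeting $T^n([0,1])$ for no $n\geq1$, contradicting mixing. Thus $T^k([0,1])=[0,1]$ for every $k\geq 0$.

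Third, take $m := \max_{1\le i\le N} n_i$. For each $i$ I get
\[
T^m(J_i) = T^{m-n_i}\!\bigl(T^{n_i}(J_i)\bigr) = T^{m-n_i}([0,1]) = [0,1].
\]
Finally, given any interval $U$ with $|U|\geq\gamma$, pick $i$ with $J_i\subseteq U$; then $[0,1]=T^m(J_i)\subseteq T^m(U)\subseteq[0,1]$, so $T^m(U)=[0,1]$, as required.

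The only point requiring care is the passage from ``$T^{n_i}(J_i)=[0,1]$ for some $n_i$'' to ``$T^{m}(J_i)=[0,1]$ for all $m\geq n_i$'', i.e.\ the need to take the maximum rather than merely a common exponent. This is precisely where surjectivity of $T$ is used; once it is invoked, the finite‑cover argument gives the uniform $m$ for free.
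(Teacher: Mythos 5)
Your proof is correct and follows essentially the same route as the paper: cover $[0,1]$ by finitely many intervals of length less than $\gamma/2$ (so that any interval of length $\geq\gamma$ contains one of them), apply the mixing $\iff$ locally eventually onto equivalence from \cite{BC} to each piece, and take the maximum of the resulting exponents. The only difference is that you explicitly justify the step from $T^{n_i}(J_i)=[0,1]$ to $T^{m}(J_i)=[0,1]$ via surjectivity, a detail the paper leaves implicit.
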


\begin{proof}
	Let $\{U_1, \dots, U_k\}$ be a partition of $[0,1]$ into intervals whose lengths are smaller than $\gamma/2$. Since $T$ is locally eventually onto, for $i=1,\dots,k$ there is a positive integer $m_i$ with $f^{m_i}(U_i)=[0,1]$. Put $m:=\max\{m_1, \dots, m_k\}$ and realize that if $|U|\geq \gamma$ then the interval $U$ contains at least one of the intervals $U_i$.
\end{proof}

A sequence $(f_i)^{\infty}_{i=0}$ of maps from $[0, 1]$ to $[0, 1]$ is called a \emph{nonautonomous system}. It is called \emph{finite}, if only finitely many different maps occur in this sequence. For $i\geq1$ and $j\geq0$ we set $f^i_j=f_{j+i-1}\circ\cdots\circ f_{j+1}\circ f_j$ and $f^0_j$ denotes the identity.

Let $\Sigma^{+}_{n} = \{1, 2, \ldots , n\}^\mathbb {Z_{+}}$ be the full $n$-shift with product topology and shift transformation~$S$. Let $\varrho $ be a metric on $\Sigma^{+}_n$ which generates the product topology, such that 
\begin{equation}\label{Eq:metric}
\text{$\varrho(\omega,\eta)<1\,$ implies that $\omega$ and $\eta$ have the same first symbol.}
\end{equation}
We define then a metric on $X= \Sigma^+_n \times [0,1]$ by $d((\omega, x), (\eta, y)) =\max(\varrho(\omega, \eta), |x-y|)$.

Any string $w = w_1w_2 \dots w_k$ of elements from $\{1, 2, \ldots , n\}$ is called a \emph{block} or a \emph{word} of length~$k$.
Concatenation of blocks is indicated by juxtaposition; if $u=u_1\dots u_m$ and $v=v_1\dots v_n$ then $uv=u_1\dots u_mv_1\dots v_n$. We will also use the notation $w^n = w\dots w$ ($n$-times) and $w^\infty = www \dots$. Clearly, a point $\alpha \in \Sigma^{+}_{n}$ is periodic for the shift transformation if and only if $\alpha = (\alpha_0\alpha_1\ldots\alpha_{p-1})^\infty$ for some block $\alpha_0\alpha_1\ldots\alpha_{p-1}$.

If $B\subseteq \Sigma^{+}_{n}$ is nonempty, closed and invariant, i.e. $S(B)\subseteq B$, then $B$ with $S$ restricted to $B$ is again a dynamical system, called a \emph{subshift}. If no confusion can arise, the phase space $B$ itself is called a subshift. If $w_0w_1w_2\dots$ is an element of the subshift $B$, then every word
$w_nw_{n+1}\dots w_{n+k}$ ($n,k\geq0$) is called a \emph{$B$-word}.

\section{Finite nonautonomous systems and non-shrinking of intervals}\label{S:non-shrink}

The following theorem will be used in the proof of our main result. 

\begin{theorem}\label{T:non-shrink}
	Let finitely many expanding, piecewise monotone, continuous maps $T_1, T_2,\ldots, T_n$ from $[0, 1]$ to $[0, 1]$ be given. Then for every $\varepsilon > 0$ there is $\gamma >0$ such that for any finite nonautonomous system  $(f_i)^{\infty}_{i=0}$ containing only maps from the given finite family, and for any interval $U$ with $|U|\geq\varepsilon$ we have $\inf_{i\geq 0}|f^i_0(U)| \geq \gamma$.
\end{theorem}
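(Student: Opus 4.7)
The plan is to combine an elementary expansion estimate with a finer analysis that exploits the finiteness of the critical point and critical value sets. Throughout, let $\alpha>1$ be a common expansion rate, let $\mathcal{C}_j$ denote the set of critical points of $T_j$, let $\mathcal{C}=\bigcup_{j=1}^{n}\mathcal{C}_j$ be the finite union, let $\mathcal{V}=\bigcup_{j=1}^{n}T_j(\mathcal{C}_j)$ be the corresponding finite set of critical values, and let $N$ bound the number of monotone pieces of any $T_j$.

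The basic pointwise estimate is this: if an interval $V$ contains $k$ critical points of $T_j$ in its interior, then $V$ splits into $k+1$ monotone subintervals, the longest of length at least $|V|/(k+1)$; since $T_j$ expands each such piece by at least $\alpha$, one obtains $|T_j(V)|\geq\alpha|V|/(k+1)$. In particular $|T_j(V)|\geq\alpha|V|/(N+1)$ unconditionally, and $|T_j(V)|\geq\alpha|V|/2$ whenever $|V|$ is less than the minimum gap $\delta>0$ between distinct critical points of any single $T_j$. Iterating the first bound gives $|f_0^i(U)|\geq(\alpha/(N+1))^i\varepsilon$, which stays above any prescribed positive threshold for some bounded number of initial steps. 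This reduces the problem to the regime where the iterate $V_i:=f_0^i(U)$ is already short.

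In that regime I would use a straddle/monotone dichotomy. Call step $i$ \emph{straddling} if $V_i$ contains a turning critical point of $f_i$ in its interior, and \emph{monotone} otherwise. A monotone step gives $|V_{i+1}|\geq\alpha|V_i|$, while a straddling step can shrink the length to as little as $\alpha|V_i|/2$. The crucial finiteness input is that after a straddling step, one endpoint of $V_{i+1}$ lies exactly in $\mathcal{V}$: set $\eta:=\min\{|v-c|:v\in\mathcal{V},\,c\in\mathcal{C},\,v\neq c\}>0$. Any short interval of length less than $\eta$ with such an endpoint has no critical point of any $T_j$ strictly in its interior (critical points coinciding with the endpoint $v\in\mathcal{V}$ sit at the boundary and do not straddle), so the next step is forced to be monotone. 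Thus in the small regime straddles cannot occur in consecutive steps, and this density bound on straddles produces a uniform lower bound $\gamma=\gamma(\varepsilon)>0$ on $|f_0^i(U)|$.

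The main obstacle is that the recipe ``one straddle followed by one monotone step'' only yields net growth factor $\alpha^2/2$, which is $\geq 1$ only when $\alpha\geq\sqrt{2}$. For smaller $\alpha$ the bookkeeping has to be pushed further: after a straddle, one shows that the interval must remain in the small regime for at least $\lceil\log 2/\log\alpha\rceil$ consecutive monotone steps before another straddle can occur. This uses the iterated critical-value sets $\mathcal{V}^{(k)}=\{T_{j_k}\cdots T_{j_1}(v):v\in\mathcal{V},\,j_1,\dots,j_k\in\{1,\dots,n\}\}$ and the associated distance thresholds $\eta^{(k)}=\min\{|w-c|:w\in\mathcal{V}^{(k)},\,c\in\mathcal{C},\,w\neq c\}$. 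Each $\eta^{(k)}$ is the minimum of a finite set of positive numbers, hence positive; the delicate part is to obtain a uniform lower bound on $\eta^{(k)}$ for $k\leq\lceil\log 2/\log\alpha\rceil$, which depends only on the finite family $\{T_1,\dots,T_n\}$, and thereby to pin down $\gamma$ as an explicit function of $\varepsilon$ and the given data.
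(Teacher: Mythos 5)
Your central mechanism is sound and is essentially the paper's own mechanism viewed from the image side rather than the preimage side: because the critical points and their images under all words of length at most $m$ in $T_1,\dots,T_n$ form finitely many finite sets, a short interval that has just straddled a single turning point acquires an endpoint in an iterated critical-value set and therefore cannot straddle again for $m$ consecutive steps, where $m$ is chosen with $\alpha^m>2$; the growth $\alpha^m$ then overcompensates the loss factor $\alpha/2$ at the straddle. But the step you single out as ``the delicate part'' is in fact immediate: each $\eta^{(k)}$ is the minimum of a finite set of positive numbers, hence positive, and you only need $k\le m$, so $\min_{0\le k\le m}\eta^{(k)}>0$ requires no further argument. (This quantity plays exactly the role of the paper's $\beta=\min_{\Delta}\beta_{\Delta}$, computed there for small intervals anchored at a critical point rather than at a critical value.)

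The genuine gap is in the assembly, which the sketch leaves undone and which is where the actual work lies. Tracking the full interval $V_i=f_0^i(U)$ forces you to handle: (i) the handoff between regimes --- when $V_i$ first drops below the minimal critical gap $\delta$ it may have just straddled several critical points at once, in which case $V_{i+1}$ need not have an endpoint in $\mathcal{V}$, so a further unprotected straddle (another factor $\alpha/2$) can occur before your mechanism engages; (ii) the possibility that during the $m$ protected monotone steps the interval grows back above $\min_k\eta^{(k)}$ or above $\delta$, after which the protection is lost and the analysis must restart from the large regime; and (iii) an explicit inductive invariant (for instance, that the length at successive small-regime straddle times is bounded below by a fixed constant) that actually produces a single $\gamma(\varepsilon)$. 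All of this is patchable, but none of it is written, and it is precisely this bookkeeping that the paper eliminates with one device worth adopting: instead of following $f_0^i(U)$ itself, each time a straddle occurs one passes to a subinterval of length exactly $\gamma\le\min(\varepsilon/2,\beta)$ having the straddled critical point as an endpoint. The choice of $\beta$ guarantees $m$ monotone steps for that subinterval, its image has length greater than $2\gamma$ by the time of the next straddle, so a fresh $\gamma$-subinterval anchored at the new critical point can be extracted, and the original interval always contains the tracked one; the uniform bound $|f_0^i(U)|\ge\gamma$ then follows by a short induction with no case distinctions.
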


\begin{proof}
	Fix $\varepsilon>0$. We are going to choose a suitable $\gamma>0$. For $1\leq j\leq n$ let $\alpha_j > 1$ be the expansion rate of the expanding map $T_j$. Set $\alpha=\min(\alpha_1, \alpha_2,\ldots, \alpha_n)$. We have $\alpha>1$. We fix a positive integer $m$ such that  $\alpha^m>2$. 
	
	Set $\mathcal{F}=\{ T_1, T_2,\ldots, T_n\}^m$. For any $\Delta=(H_1, H_2,\ldots, H_m)\in \mathcal{F}$ we find a number $\beta_{\Delta}>0$ such that any interval $U\subseteq [0, 1]$ of length $\leq\beta_{\Delta}$, which has a critical point of~$H_1$ as endpoint, satisfies the condition that
	
	\begin{quote}
		the interval $G_{j-1}(U)$ has no critical point of $H_j$ in its interior for $1\leq j\leq m$,
	\end{quote}
	
	\noindent where $G_0$ is the identity and $G_i=H_i\circ\cdots\circ H_2\circ H_1$ for $1\leq i\leq m$. Now we set $\beta=\min_{\Delta\in \mathcal{F}}\beta_\Delta$. Since $\mathcal F$ is finite we have $\beta>0$. Finally we choose $\gamma=\min(\frac{\varepsilon}{2}, \beta)$.
	
	Now let  $(f_i)^{\infty}_{i=0}$ be a finite nonautonomous system consisting of the maps $T_1, T_2,\ldots, T_n$ and let $U$ be an interval with $|U|\geq\varepsilon$. We have to show that $|f^i_0(U)|\geq\gamma$ for all $i\geq 0$.
	
	Let $j_1\geq 0$ be minimal such that the interval $f^{j_1}_0(U)$ contains a critical point of $f_{j_1}$ in its interior. Since $f^i_0(U)$ does not contain a critical point of $f_i$ in its interior for $0\leq i<j_1$ and all maps are expanding, we get 
	\[
	|f^i_0(U)| \geq \varepsilon \geq \gamma \quad \text{for} \quad 0\leq i\leq j_1.
	\]
	The interval $f^{j_1}_0(U)$ has length $\geq \varepsilon$ and contains a critical point of $f_{j_1}$ in its interior. Because of $\gamma \leq \frac{\varepsilon}{2}$ we find an interval $V_1\subseteq f^{j_1}_0(U)$, which has a critical point of $f_{j_1}$ as endpoint and satisfies $|V_1|=\gamma$. Let $j_2 \geq 0$ be minimal such that the interval $f^{j_2}_{j_1} (V_1)$ contains a critical point of $f_{s_2}$ in its interior, where $s_2 = j_1 + j_2$. Since $f^i_{j_1}(V_1)$ does not contain a critical point of $f_{j_1+i}$ in its interior for $0\leq i < j_2$ and all maps are expanding with expansion rate at least $\alpha$, we get 
	\[
	|f^{j_1+i}_0 (U)| \geq |f^i_{j_1}(V_1)| \geq \alpha^i \gamma \geq \gamma \quad 
	\text{for} \quad 0 \leq i \leq j_2.
	\]
	By the choice of $\beta$ and because of $\gamma \leq \beta$ we get $j_2 \geq m$. This implies $|f^{j_2}_{j_1}(V_1)| \geq \alpha^m\gamma>2\gamma$. Since the interval $f^{j_2}_{j_1}(V_1)$ contains a critical point of $f_{s_2}$ in its interior, we find an interval $V_2 \subseteq f^{j_2}_{j_1}(V_1)$, which has a critical point of $f_{s_2}$ as endpoint and satisfies $|V_2|=\gamma$. Now we can continue as above. Let $j_3\geq0$ be minimal such that the interval $f^{j_3}_{s_2}(V_2)$ contains a critical point of $f_{s_3}$ in its interior, where $s_3=s_2+j_3$. Since $f^i_{s_2}(V_2)$ does not contain a critical point of $f_{s_2+i}$ in its interior for $0\leq i < j_3$, we get as above
	\[
	|f^{s_2+i}_0(U)| \geq |f^i_{s_2}(V_2)| \geq \alpha^i \gamma \geq \gamma \quad
	\text{for} \quad 0\leq i \leq j_3.
	\]
	Again we get $j_3\geq m$ and $|f^{j_3}_{s_2}(V_2)| \geq \alpha^m \gamma > 2 \gamma$. And we find an interval $V_3 \subseteq f^{j_3}_{s_2}(V_2)$, which has a critical point of $f_{s_3}$ as endpoint and satisfies $|V_3|=\gamma$.
	
	Repeating this procedure we finally get $|f^i_0(U)|\geq \gamma$ for all $i\geq 0$. Therefore, the theorem is proved. 
\end{proof}

In Theorem~\ref{T:non-shrink} we assume that the piecewise monotone maps $T_1,\dots, T_n$ are expanding, i.e. they have expansion rates $\alpha_j>1$, $j=1,\dots, n$. We are interested in whether it works under the weaker assumption that $|T_j(x)-T_j(y)| \geq |x-y|$ holds for every $j=1, \dots, n$ and for all $x, y$ which are in the same interval of monotonicity of $T_j$. The next example shows that this is not the case, even if the maps are surjective and piecewise linear.

\begin{example}[Theorem~\ref{T:non-shrink} does not work if $\alpha_j\geq 1$ instead of $\alpha_j>1$, $j=1,\dots,n$]\label{Ex:nonaut}
	We are going to find a finite nonautonomous system $(f_i)_{i=0}^\infty$ on $[0,1]$ made of three piecewise linear, surjective, continuous maps $\varphi$, $f$ and $g$ with slopes (in absolute value) $\geq 1$ such that for some non\-de\-ge\-ne\-ra\-te interval~$J$, $\lim_{n\to \infty} |f_0^n(J)| = 0$. 
	
	We will use the following notation for ``connect the dots" maps. Let $0 = a_0 < a_1 < \dots < a_n = 1$ and $b_i \in I = [0,1]$, $i=0,1,\dots, n$. Then by $\langle (a_0, b_0), \dots, (a_n, b_n) \rangle$ we will denote the map $I\to I$ which sends $a_i$ to $b_i$, $i=0,1,\dots, n$ and is linear on each of the intervals $[a_i, a_{i+1}]$, $i=0,1,\dots, n-1$.
	
	To construct the system, fix a small positive irrational number $\xi$; in fact any irrational $\xi$ with $0< \xi < 1/4$ is good for our purposes. Consider the following piecewise linear maps:
	\begin{align}
	\varphi  & = \langle (0, \xi), (1-\xi, 1), (1, 0) \rangle,  \notag \\
	f  & = \langle (0, 1), (1-2\xi, 0), (1, 2\xi) \rangle,   \notag \\
	g  & = \langle (0, 1), (1/4, 1/4), (1/2, 0), (1, 1/2) \rangle~.   \notag
	\end{align}
	All the slopes of $\varphi$, $f$ and $g$ are (in absolute value) $\geq 1$.
	Put $J=[0,\xi]$. The system $(f_i)_{i=0}^\infty$ will have the form
	\begin{equation}\label{Eq:nonaut}
	(f_i)_{i=0}^\infty \, = \, \underbrace{\varphi, \varphi, \dots, \varphi}_{k_1}, \, \psi_1, \, \underbrace{\varphi, \varphi, \dots, \varphi}_{k_2}, \, \psi_2, \, \underbrace{\varphi, \varphi, \dots, \varphi}_{k_3}, \, \psi_3, \, \dots
	\end{equation}
	where each $\psi_i$ is either $f$ or $g$. We choose
	\[
	k_1 = \min \{k: \varphi^k(J) \text{ contains $1/2$ in its closed middle third or intersects } [1-\xi,1]\}~.
	\]
	Note that the interval $\varphi^{k_1}(J)$ is obtained from $J$ by translation by $k_1\xi$ and so has length $\xi$.
	If $\varphi^{k_1}(J)$ contains $1/2$ in its closed middle third, we choose $\psi_1 = g$ and then $\psi_1(\varphi^{k_1}(J))$ is an interval whose left endpoint is $0$ and has length $\leq (2/3)\xi = (2/3)|J|$. Otherwise the interval $\varphi^{k_1}(J)$ intersects $[1-\xi,1]$ and thus it is a subset of $[1-2\xi,1]$. We choose $\psi_1=f$. Then $\psi_1(\varphi^{k_1}(J))$ is a subinterval of $[0,2\xi]$ and has length $\xi$; it is obtained from $\varphi^{k_1}(J)$ by translation by $2\xi-1$.
	
	In either case we have that $J_1=\psi_1(\varphi^{k_1}(J))$ is a subinterval of $[0,2\xi]$ and we choose
	\[
	k_2 = \min \{k: \varphi^k(J_1) \text{ contains $1/2$ in its closed middle third or intersects } [1-\xi,1]\}~.
	\]
	Again, if $\varphi^{k_2}(J_1)$ contains $1/2$ in its closed middle third, we choose $\psi_2 = g$ and then
	\[
	\left | \psi_2(\varphi^{k_2}(J_1))\right | \leq \frac{2}{3} \left |\varphi^{k_2}(J_1) \right | = \frac{2}{3} \left |J_1 \right |.
	\]
	Otherwise the interval $\varphi^{k_2}(J_1)$ intersects $[1-\xi,1]$ and we choose $\psi_2=f$. Then the interval $\psi_2(\varphi^{k_2}(J_1)) \subseteq [0,2\xi]$, obtained from $\varphi^{k_2}(J_1)$ by translation by $2\xi-1$, has the same length as $J_1$.
	We continue in the same way with the interval $J_2=\psi_2(\varphi^{k_2}(J_1))$, etc. By induction we get the system~(\ref{Eq:nonaut}). 
	
	Note that when we iterate the interval $J=[0,\xi]$ under this system, its length does not change except of the moments when we apply $\psi_i = g$, when the length decreases at least by factor $2/3$. We claim that $\psi_i = g$ for infinitely many $i$'s, which immediately implies that $\lim_{n\to \infty} |f_0^n(J)| = 0$. 
	
	Suppose, on the contrary, that $\psi_i = g$ only for finitely many $i$'s. Then for large enough $m$ the interval $J_m$ (which is a subset of $[0, 2\xi]$ and has length at most $\xi$) has the following property: We have $J_m = f_0^n(J)$ for some $n=n(m)$ and the middle thirds of the (nondegenerate) intervals
	\begin{equation}\label{Eq:Jm etc}
	J_m = f_0^n(J), \, f_0^{n+1}(J),\, f_0^{n+2}(J), \, \dots
	\end{equation}
	do not contain $1/2$. Therefore each of the intervals $f_0^{n+i}(J)$, $i=1,2,\dots$, is obtained from the previous interval in \eqref{Eq:Jm etc} by translation, either by $\xi$ or by $2\xi -1$. By identifying $0$ and $1$ in $[0,1]$, we get a circle. Then our iterative process, assigning the sequence \eqref{Eq:Jm etc} to $J_m$, corresponds to the iterating $J_m$ by the irrational circle rotation by the angle $2\pi \xi$ except of the moments when our interval is $\xi$-close to $1$ ``from the left", when we use the rotation by $2\pi \cdot 2\xi$. However, the rotation by $2\pi \cdot 2\xi$ is the same as to use the rotation by $2\pi \xi$ twice. Since $\xi \in (0,1/4)$, during the first of these two applications of the rotation by $2\pi \xi$ our interval does not hit $1/2$. Therefore, using the fact that the middle thirds of the intervals in \eqref{Eq:Jm etc} do not contain $1/2$, we conclude that  the middle third of $J_m$ never hits $1/2$ under the circle rotation by the angle $2\pi \xi$. This contradicts the irrationality of $\xi$ (irrational rotations are minimal). 
\end{example}	
	
\section{Main result}\label{S:main}

Our main result is the following sufficient condition for the step skew product~\eqref{Eq:F} to have the specification property.

\begin{theorem} \label{T:main}
	Let $T_1, T_2, \ldots, T_n$ be piecewise monotone, continuous maps on $[0,1]$, which are expanding and surjective. Suppose that $B\subseteq \Sigma^+_n$ is a subshift which has the specification property and contains a periodic point $\alpha = (\alpha_0\alpha_1\ldots\alpha_{p-1})^\infty$ such that $T_{\alpha_{p-1}} \circ \cdots \circ T_{\alpha_1} \circ T_{\alpha_0}$ is mixing. Let $F:X\to X$ be the step skew product with $X=B\times [0,1]$ and $F(\omega, x) = (S(\omega), T_{\omega}(x))$, where $S\colon B\to B$ is the shift transformation and  $T_{\omega} = T_q$, if $q$ is the first symbol of $\omega$. Then $(X, F)$ has the specification property.
\end{theorem}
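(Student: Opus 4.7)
Given $\varepsilon > 0$, the plan is to produce a periodic $\varepsilon$-tracing point $u = (\bar\omega, \bar x) \in B \times [0,1]$ in two layers: the base orbit $\bar\omega \in B$ coming from specification of $B$, and the fibre $\bar x$ coming from Theorem~\ref{T:non-shrink}, Lemma~\ref{L:mix-exp}, and an IVT-based fixed-point argument. First, apply Theorem~\ref{T:non-shrink} to $\varepsilon$ to obtain $\gamma_1$, and apply it to $\gamma_1$ to obtain $\gamma_2$. Apply Lemma~\ref{L:mix-exp} to the mixing piecewise monotone map $G := T_{\alpha_{p-1}} \circ \cdots \circ T_{\alpha_0}$ with $\gamma_2$ to get an integer $m$ with $G^m(U) = [0,1]$ whenever $|U| \geq \gamma_2$. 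Choose $\varepsilon' \leq \varepsilon$ small enough that $\varrho(\omega, \eta) < \varepsilon'$ forces $\omega_0 = \eta_0$ via \eqref{Eq:metric}, and let $M_B$ be the $B$-specification gap length for $\varepsilon'$. Set $M := 2M_B + mp$.

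\textbf{Base and fibre surjectivity.} For the given orbit segments $((\omega_j, x_j), n_j)$, $j = 1, \dots, k$, apply the $B$-specification at level $\varepsilon'$ to the inflated $2k$-list
\[
(\omega_1, n_1), (\alpha, mp), (\omega_2, n_2), (\alpha, mp), \ldots, (\omega_k, n_k), (\alpha, mp)
\]
with gap $M_B$, obtaining periodic $\bar\omega \in B$ of period $r_k = \sum_{j=1}^k n_j + k(mp + 2M_B)$ whose symbols equal those of $\omega_j$ on segment $j$ and equal the $\alpha$-block exactly $m$ times in each inflation. The fibre map over segment $j$ is then $A_j := T_{\omega_{j, n_j - 1}} \circ \cdots \circ T_{\omega_{j, 0}}$, and the fibre map over the full gap of length $M$ between consecutive segments is some composition $\Gamma_j$ of the $T_i$'s with $G^m$ in the middle. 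Let $V_j$ be the connected component of $x_j$ in $\{z : |\phi_i^{(j)}(z) - \phi_i^{(j)}(x_j)| < \varepsilon \text{ for all } 0 \leq i \leq n_j\}$, where $\phi_i^{(j)}$ is the nonautonomous composition along $\omega_j$; this is an open interval about $x_j$. I claim $|A_j(V_j)| \geq \gamma_1$: if $V_j = [0,1]$ then $|A_j(V_j)| = 1$ by surjectivity of the $T_i$'s, so assume $V_j$ has an endpoint $a \in (0,1)$; by maximality some constraint is tight there at a time $i^* \leq n_j$, so $|\phi_{i^*}^{(j)}([a, x_j])| \geq \varepsilon$, and Theorem~\ref{T:non-shrink} applied to the nonautonomous tail starting at time $i^*$ yields $|\phi_{n_j}^{(j)}([a, x_j])| \geq \gamma_1$, hence $|A_j(V_j)| \geq \gamma_1$. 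A further application of Theorem~\ref{T:non-shrink} to the pre-$G^m$ portion of $\Gamma_j$ (whose input has size $\geq \gamma_1$, so output has size $\geq \gamma_2$), the choice of $m$ (so that $G^m$ surjects onto $[0,1]$), and surjectivity of the post-$G^m$ portion jointly yield $(\Gamma_j \circ A_j)(V_j) = [0,1]$.

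\textbf{Fixed point, tracking, and main obstacle.} Set $\Phi_j := \Gamma_j \circ A_j$ and $\Psi := \Phi_k \circ \cdots \circ \Phi_1$. I want $\bar x \in V_1$ with $\Psi(\bar x) = \bar x$ and each $z_j := (\Phi_{j-1} \circ \cdots \circ \Phi_1)(\bar x) \in V_j$, and construct it by backward pullback: assuming that for $m$ large each $\Phi_j$ admits monotone-homeomorphic branches from a closed subinterval of $V_j$ onto any prescribed closed subinterval of $V_{j+1}$ (which reduces to $G^m$ admitting monotone branches with full range $[0,1]$ within every interval of length $\geq \gamma_2$), one picks $\tilde V_k \subseteq V_k$ with $\Phi_k|_{\tilde V_k}$ a monotone homeomorphism onto $V_1$, then $\tilde V_{k-1} \subseteq V_{k-1}$ onto $\tilde V_k$, and so on down to $\tilde V_1 \subseteq V_1$; then $\Psi|_{\tilde V_1}$ is a monotone homeomorphism onto $V_1 \supseteq \tilde V_1$, and IVT yields a fixed point $\bar x \in \tilde V_1$ whose forward orbit lies in $\tilde V_j \subseteq V_j$ throughout. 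Setting $u = (\bar\omega, \bar x)$, we then have $F^{r_k}(u) = u$, base distance $\varrho < \varepsilon' \leq \varepsilon$ at each tracking time by the base specification, and fibre distance $< \varepsilon$ at each tracking time because $z_j \in V_j$; hence $d(F^{r_{j-1} + i}(u), F^i(y_j)) < \varepsilon$ for $0 \leq i < n_j$. The main obstacle is the monotone full-branch property of $G^m$ invoked above, a structural fact about mixing piecewise monotone expanding interval maps that one expects to hold for $m$ large but which requires an auxiliary lemma not contained in the excerpt.
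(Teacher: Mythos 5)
Your construction up to the surjectivity statement $(\Gamma_j \circ A_j)(V_j)=[0,1]$ is essentially the paper's proof: the same inflated $2k$-list $(\omega_1,n_1),(\alpha,mp),(\omega_2,n_2),\dots$ fed to the base specification, the same gap $M=mp+2\,(\text{base gap})$, the same maximal tracking intervals $V_j$ (the paper's $J_j$), and the same combination of Theorem~\ref{T:non-shrink} and Lemma~\ref{L:mix-exp} to blow each $V_j$ up to all of $[0,1]$ by the end of the following gap. (The paper applies Theorem~\ref{T:non-shrink} once, to the single nonautonomous system read off the traced base point, so one $\gamma$ suffices; your two-stage $\gamma_1,\gamma_2$ version is an equivalent bookkeeping choice. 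Likewise your $\varepsilon'$ is unnecessary because \eqref{Eq:metric} already makes $\varrho<1$ force agreement of first symbols.)

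The genuine gap is exactly where you flag it, but the obstacle is of your own making. You do not need $G^m$ or $\Phi_j$ to admit monotone homeomorphic full branches, and that property is indeed not free for piecewise monotone maps. All that is needed is the elementary covering lemma for continuous interval maps: if $g$ is continuous on a compact interval $J$ and $g(J)\supseteq L$ for a compact interval $L$, then there is a compact subinterval $J'\subseteq J$ with $g(J')=L$ (no monotonicity, no injectivity). Applying this with $g=\Phi_j$, $J=\overline{V_j}$, $L=\overline{V_{j+1}}$ --- legitimate because $\Phi_j(\overline{V_j})=[0,1]$ --- produces nested compact intervals $K_{k-1}\subseteq\cdots\subseteq K_1\subseteq \overline{V_1}$ with $(\Phi_{j}\circ\cdots\circ\Phi_1)(K_{j})=\overline{V_{j+1}}$ and $\Psi(K_{k-1})=[0,1]\supseteq K_{k-1}$; the intermediate value theorem then gives a fixed point $\bar x$ of $\Psi$ in $K_{k-1}$, and $\bar x\in K_{j-1}$ forces the $j$-th intermediate image into $\overline{V_j}$, which is all the fibre tracking requires. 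This is precisely how the paper finishes. So your argument is repaired by replacing the monotone-branch pullback with this covering argument; as written, with the full-branch property left as an unproved ``auxiliary lemma,'' the proof is incomplete.
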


\begin{proof} 
	Fix $\varepsilon \in (0,1)$. We have to show that there is an integer $M$ which has the following property: For any integer $k\geq 2$, and for any 
	\[
	(\omega_1, x_1,n_1),  (\omega_2, x_2, n_2),\dots , (\omega_k, x_k, n_k) \in B \times [0,1] \times \mathbb {N}
	\]
	there exists a point $(\eta, z) \in B \times [0,1]$ with $F^{r_k}(\eta, z) = (\eta, z)$ such that
	\[
	d(F^i(\omega_j, x_j), F^{r_{j-1}+i}(\eta, z)) \leq \varepsilon \quad \text{for } \, 0\leq i <n_j \, \text{ and } \, 1\leq j \leq k
	\]
	where $r_0=0$ and $r_j= n_1+n_2+\cdots+n_j+jM$ for $1\leq j\leq k$.
	
	In order to show this, we first choose $M$, which depends only on the fixed $\varepsilon>0$. For the maps $T_1, T_2, \ldots, T_n$ and the fixed $\varepsilon$ let $\gamma>0$ be as in Theorem~\ref{T:non-shrink}. For this $\gamma$ and for the transformation $T_{\alpha_{p-1}}\circ \cdots \circ T_{\alpha_1} \circ T_{\alpha_0}$ choose $m$ as in Lemma~\ref{L:mix-exp}. For the given $\varepsilon$ let $K$ be the gap length of the dynamical system $(B,S)$ which has the specification property. Then set $M=mp+2K$.

	Since the dynamical system $(B,S)$ has specification property (with $K$ being the gap length for the given $\varepsilon$), for 
	\[
	(\omega_1, n_1), (\alpha, mp), (\omega_2, n_2), (\alpha, mp), \dots , (\omega_k, n_k), (\alpha, mp) \in B \times \mathbb {N}
	\]
	there exists a point $\eta \in B$ such that
	\begin{equation}\label{Eq:1eta}
	S^{r_k}(\eta) = \eta \quad \text{and} \quad \varrho(S^i(\omega_j), S^{r_{j-1}+i}(\eta)) \leq \varepsilon \quad \text{for } \, 0\leq i<n_j \, \text{ and }  \, 1\leq j \leq k
	\end{equation}
	where $r_0 =0$ and $r_j = n_1+n_2+\cdots+n_j+jM$ for $1\leq j \leq k$ are as above, and such that
	\begin{equation}\label{Eq:eta}
	\varrho(S^i(\alpha), S^{r_{j-1}+n_j+K+i}(\eta)) \leq \varepsilon \quad \text{for } \, 0\leq i<mp \, \text{ and } \, 1\leq j \leq k. 
	\end{equation}
	Therefore $\eta \in B$ is already found with the desired properties.
	
	Now let $(f_i)^{\infty}_{i=0}$ be the nonautonomous system determined by this point $\eta \in \Sigma^+_n$, this means $f_i = T_q$, if $q$ is the symbol in the $i$-th place in $\eta$ (we start counting with 0). Remember that for $i\geq1$ and $n\geq0$ we set $f^i_n= f_{n+i-1} \circ \cdots \circ f_{n+1} \circ f_n $ and $f^0_n$ denotes the identity. Now set 
	\[
	\widetilde{J_j} =\{y\in[0,1]:|f^i_{r_{j-1}} (x_j) - f^i_{r_{j-1}}(y)| \leq \varepsilon \, \text{ for } 
	\, 0\leq i <n_j\} \quad \text{for } \, 1 \leq j \leq k.
	\]
	Let $J_j$ be the connected component of $\widetilde{J_j}$ which contains the point $x_j$. 
	
	Since the maps $f_{n}^{i}$ are continuous, for every $1\le j \le k$ we get that the set $J_{j}$ is a closed interval which is a neighbourhood of $x_j$ (in the topology of $[0,1]$). Furthermore, for $1\le j \le k$ there is $ i<n_{j}$ with $|f_{r_{j-1}}^{i}(J_{j})|\ge\varepsilon$ (otherwise, by continuity, an interval properly containing $J_j$ would be a component of $\widetilde{J_j}$, a contradiction). In the following we set $s_{j}=n_{j}+M$ for $1\le j \le k$. We have then $ r_{0}=0$ and $r_{j}= r_{j-1}+s_{j}$ for $1 \le j \le k$. 
	
	We start with $ J_{1}$. Because of $|f_{0}^{i}(J_{1})| \ge  \varepsilon $ for some 
	$i < n_{1}$ we have \mbox{$|f_{0}^{n_{1}+K}(J_{1})| \ge  \gamma$} by Theorem~\ref{T:non-shrink}. 
	By~\eqref{Eq:eta} and~\eqref{Eq:metric} we have $f_{n_{1}+K}^{mp}=U^{m}$ with $U=T_{\alpha_{p-1}} \circ \cdots \circ T_{\alpha_1} \circ T_{\alpha_0}$. 
	By Lemma~\ref{L:mix-exp} we get then $f_{n_{1}+K}^{mp} \circ f_{0}^{n_{1}+K}(J_{1})=[0, 1]$, 
	which means \mbox{$f_{0}^{n_{1}+K+mp}(J_{1})=[0, 1]$}. 
	Since the fibre maps are surjective and we have $n_{1}+ K + mp < n_{1} + M = s_{1}$, 
	this implies $f_{0}^{s_{1}}(J_{1})=[0, 1]$.
	We find an interval $K_{1} \subseteq J_{1} $ with $f_{0}^{r_{1}}(K_{1})=J_{2}$,
	where we have used that $s_{1}=r_{1}$.
	
	Now we consider $J_{2}$. In the same way as above we get $f_{r_{1}}^{s_{2}}(J_{2})=[0, 1]$. 
	Because of $f_{r_{1}}^{s_{2}} \circ f_{0}^{r_{1}}=f_{0}^{r_{2}} $ this implies 
	$f_{0}^{r_{2}}(K_{1})=[0, 1]$.  We find an interval $K_{2} \subseteq K_{1} \subseteq J_{1}$ with  $f_{0}^{r_{2}}(K_{2})=J_{3}$.
	
	Next we consider $J_{3}$. As above we get $f_{r_{2}}^{s_{3}}(J_{3})= [0, 1]$. 
	Because of $f_{r_{2}}^{s_{3}} \circ f_{0}^{r_{2}}=f_{0}^{r_{3}} $ this implies 
	$f_{0}^{r_{3}}(K_{2})= [0, 1]$. We find an interval $K_{3} \subseteq K_{2} \subseteq K_1 \subseteq J_{1}$ with $f_{0}^{r_{3}}(K_{3})=J_{4}$. 
	
	Finally we end with intervals $K_{k-1} \subseteq K_{k-2}  \subseteq  \dots \subseteq K_{1} \subseteq J_{1}$
	satisfying $f_{0}^{r_{k}}(K_{k-1})=[0, 1]$ and $f_{0}^{r_{j-1}}(K_{j-1})=J_{j}$ for $2 \le j \le k$. 
	Because of $f_{0}^{r_{k}}(K_{k-1})= [0, 1]$ we find a point $z \in K_{k-1} $ with 
	$f_{0}^{r_{k}}(z) = z$. 
	Furthermore, we have $ z \in J_{1} $ and for $2 \le j \le k$ we have $f_{0}^{r_{j-1}}(z) \in J_{j}$,
	since $z \in K_{j-1}$ and $f_{0}^{r_{j-1}}(K_{j-1})=J_{j}$. 
	By the definition of the sets $J_{j}$ this implies  
	\[
	|f_{r_{j-1}}^{i}(x_{j})-f_{0}^{r_{j-1}+i}(z)| \le \varepsilon \quad \text{for} \quad 0 \le i < n_{j} \, \text{ and } \, 1 \le j \le k.
	\] 
	Together with~\eqref{Eq:1eta} this gives using the definition of the nonautonomous system 
	$(f_{i})_{i=0}^{\infty}$ that $F^{r_k}(\eta,z)=(\eta,z)$ and that
	\[
	d(F^{i}(\omega_{j},x_{j}), F^{r_{j-1}+i}(\eta,z))\le \varepsilon \quad \text{for} \quad 0 \le i < n_{j} \, \text{ and } \, 1 \le j \le k.
	\]
	The theorem is proved. 
\end{proof}

\begin{corollary}\label{C:cor}
	Let $T_1,T_2,\ldots, T_n$ be piecewise monotone, continuous maps on $[0, 1]$, 
	which are expanding and mixing. Suppose that $B\subseteq \Sigma^+_n$ is a subshift which has a fixed point. Let $F:X\to X$ be the corresponding step skew product as in  Theorem~\ref{T:main}. Then $(X, F)$ has the specification property if and only if the subshift $(B,S)$ has the specification property.
\end{corollary}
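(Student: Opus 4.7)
My plan is to treat the two implications separately; neither presents a real obstacle, since the ``if'' direction is a direct application of Theorem~\ref{T:main} and the ``only if'' direction is the standard observation that specification descends to topological factors.

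For the ``if'' direction, I would simply verify the hypotheses of Theorem~\ref{T:main}. The subshift $B$ has a fixed point of $S$ by assumption, and any such fixed point must be of the form $\alpha = q^\infty$ for some symbol $q \in \{1,\ldots,n\}$; I would take this as the periodic orbit appearing in Theorem~\ref{T:main}, with $p=1$ and $\alpha_0 = q$. The composition of fibre maps along the orbit then reduces to the single map $T_q$, which is mixing by hypothesis. All the assumptions of Theorem~\ref{T:main} are therefore satisfied, and it yields the specification property for $(X,F)$. Note that the corollary's hypothesis that \emph{every} $T_j$ be mixing, rather than just the specific $T_q$, is imposed so that the argument goes through irrespective of which symbol is carried by the fixed point of $B$.

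For the ``only if'' direction, I would use the projection $\pi\colon X\to B$ defined by $\pi(\omega,x)=\omega$, which is continuous, surjective, satisfies $\pi\circ F = S\circ \pi$, and by the chosen metric $d((\omega,x),(\eta,y)) = \max(\varrho(\omega,\eta),|x-y|)$ has the $1$-Lipschitz property $\varrho(\pi(u),\pi(v))\leq d(u,v)$. Given $\varepsilon>0$, let $M=M(\varepsilon)$ be a gap length witnessing specification for $(X,F)$. For any family of orbit segments $(y_j,n_j)\in B\times \mathbb{N}$, $1\leq j\leq k$, I would lift them to $((y_j,0),n_j)$ in $X\times\mathbb{N}$, apply specification in $X$ to obtain a periodic point $(\eta,z)\in X$ with $F^{r_k}(\eta,z)=(\eta,z)$ that $\varepsilon$-traces the lifted segments, and then project by $\pi$ to obtain $\eta\in B$ with $S^{r_k}(\eta)=\eta$ satisfying the required $\varepsilon$-tracing inequalities with the same gap $M$; this is the specification property for $(B,S)$.
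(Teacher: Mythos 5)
Your proposal is correct and follows essentially the same route as the paper: the ``only if'' direction is the standard factor argument (which the paper states in one line and you spell out explicitly via the projection $\pi(\omega,x)=\omega$), and the ``if'' direction verifies the hypotheses of Theorem~\ref{T:main} using the fixed point $q^\infty$ and the mixing of $T_q$. The only detail worth making explicit, as the paper does parenthetically, is that the fibre maps being mixing implies they are surjective, which is the form of the hypothesis required in Theorem~\ref{T:main}.
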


\begin{proof} 
	One implication is trivial, because the specification property is preserved by passing to a factor. Now assume that the subshift $(B, S)$ has the specification property. Since we assume that the (piecewise monotone) fibre maps are mixing (hence, they are also surjective) and that $S$ has a fixed point in $B$, all the assumptions of Theorem~\ref{T:main} are fulfilled. Thus $(X, F)$ has the specification property.
\end{proof}

\begin{remark}
	Recall that, by~\cite{Ber}, a subshift $B$ has the specification property if and only if it has a \emph{uniform transition length}, meaning that there exists a positive integer $M$ such that for any $B$-words $u$ and $v$ there exists a $B$-word $w$ of length $M$ such that $uwv$ is a $B$-word.
	
	For subshifts of finite type,  the specification property is equivalent to mixing, see~\cite{Bow} or \cite[Proposition 21.2 and Proposition 21.3]{DGS}. For the specification property of sofic systems see~\cite{W}.
\end{remark}

{\bf Acknowledgement.} The author thanks Franz Hofbauer for numerous useful discussions. This paper would not exist without him. 
He also thanks Piotr Oprocha for a discussion on equivalent definitions of the specification property.

\end{large}
\end{document}